\pgfplotsset{compat=newest}
\let\blx@rerun@biber\relax
\theoremstyle{plain}
\newtheorem{theorem}{Theorem}
\newtheorem{lemma}{Lemma}
\newcommand{\diam}{{\operatorname{diam}\,}}
\newcommand{\R}{{\mathbb R}}
\renewcommand{\vec}[1]{\boldsymbol{#1}}
\renewcommand{\tilde}[1]{\widetilde{#1}}
\renewcommand{\hat}[1]{\widehat{#1}}
\newcommand{\BDMIk}[1]{{I_{#1}^{\operatorname{BDM}}}}
\newcommand{\hBDMIk}[1]{{\hat{I}_{#1}^{\operatorname{BDM}}}}
\newcommand{\tBDMIk}[1]{{\tilde{I}_{#1}^{\operatorname{BDM}}}}
\renewcommand{\div}{{\operatorname{div}\,}}
\newcommand{\hdiv}{{\widehat{\operatorname{div}}\,}}
\newcommand{\tdiv}{{\widetilde{\operatorname{div}}\,}}
\newcommand{\Poly}[3][]{P^{#1}_{#2}(#3)}
\newcommand{\Polyvec}[3][]{\vec{P}^{#1}_{#2}(#3)}
\newcommand{\Hdiv}{\vec{H}_{\mathrm{div}}}
\newcommand{\abssec}[1]{\noindent\small {\bfseries #1\quad}\ignorespaces}
\renewenvironment{abstract}{\abssec{Abstract}}{\par\vspace{1em}}
\newenvironment{keywords}{\abssec{Keywords}}{\par\vspace{1em}}
\newenvironment{MSC}{\abssec{Mathematics Subject Classification (2020)}}{\par\vspace{1em}}
\title{Brezzi--Douglas--Marini interpolation on anisotropic simplices and prisms}
\date{\today}
\author{Volker Kempf}
\begin{document}
\maketitle

\begin{abstract}
	The Brezzi--Douglas--Marini interpolation error on anisotropic elements has been analyzed in two recent publications, the first focusing on simplices with estimates in $L^2$, the other considering parallelotopes with estimates in terms of $L^p$-norms.
	This contribution provides generalized estimates for anisotropic simplices for the $L^p$ case, $1\leq p\leq\infty$, and shows new estimates for anisotropic prisms with triangular base.
\end{abstract}
\begin{keywords}	
	anisotropic finite elements, interpolation error estimate, Brezzi--Douglas--Marini	element
\end{keywords}
\begin{MSC} 
	65D05, 65N30
\end{MSC}

\section{Introduction}
The Brezzi--Douglas--Marini (BDM) finite element \cite{Nedelec1986} was introduced to approximate $\Hdiv$ by polynomials.
This proves useful for problems of incompressible fluid flow, where recent approaches employ $\Hdiv$-conforming discretizations to approximate the velocity solution.
The corresponding interpolation operator can also be used as reconstruction operator to gain pressure-robust methods in the spirit of \cite{Linke2014}.
Boundary layers or edge singularities in these problems require the use of anisotropic, i.e., highly stretched elements, so interpolation error estimates for such settings are required, see \cite{ApelKempf2021}.
In \cite{ApelKempf2020}, these estimates were shown for simplicial elements in terms of the $\vec{L}^2$-norm, and \cite{Franz2021} contains estimates for anisotropic parallelotopes in terms of $\vec{L}^p$-norms.

The main focus in \cite{Franz2021} is on estimates in the $\Hdiv$-norm and while the technique for these proofs can also be used for simplices, it is not applicable for the case of prisms as defined in \cite{Nedelec1986}, as here the commuting diagram property \cite[(3)]{Franz2021} is not satisfied.
Interestingly, it is neither satisfied on cubes as defined in \cite{Nedelec1986}; here \cite{Franz2021} uses the original definition of the BDM elements, see, e.g., \cite{BoffiBrezziFortin2013}.

In the following sections we provide a generalization of the results from \cite{ApelKempf2020} to $\vec{L}^p$ spaces, $1\leq p \leq \infty$, and show new interpolation error estimates for anisotropic triangular prisms. The results with detailed proofs are contained in the author's PhD thesis \cite{Kempf2022:PhD} that is to appear.

\textsc{Notation:} Vectors, vector valued functions and the spaces of such functions are set in bold and we use $I_n = \{1,\ldots,n\}$ for index sets.
The spatial dimension is denoted by $d$ and the expression $a \lesssim b$ means there is a positive constant $C$ so that $a \leq C b$.
The norms of the Sobolev spaces $W^{m,p}(G)$ are denoted by $\norm{\cdot}_{m,p,G}$, multi-indices by $\vec{\alpha}$.
The directional derivative in direction $\vec{l}$ is denoted by $\pdv{}{\vec{l}}$.

\section{Error estimates on anisotropic simplices}
We state the anisotropic interpolation error estimates on simplices in terms of $\vec{L}^p$-norms, which is a generalization of the results from \cite{ApelKempf2020}, where only $p=2$ was considered.
As the proofs are largely analogous to those in \cite{ApelKempf2020}, we omit them here.
The principal idea of the proof is to first show stability estimates on the reference elements and then transfer them in two steps first to an element of a reference family using an affine transformation with a diagonal matrix and then to the general element.

Recall that a simplex is said to satisfy the \emph{maximum angle condition} if all angles within and between facets are bounded by a constant $\bar{\phi}<\pi$.
In addition, it satisfies the \emph{regular vertex property} if for one vertex there is a constant $\bar{c}>0$ so that $\abs{\det N} \geq \bar{c}$ holds for the matrix $N$ whose columns consist of the unit vectors $\vec{l}_i$, $i\in I_d$, along the outgoing edges from this vertex.
The lengths of the edges corresponding to the vectors $\vec{l}_i$ are the element size parameters and are denoted by $h_i$. The degrees of freedom for the BDM interpolation operator $\BDMIk{k}$ of order $k$ on simplices can be found in \cite[p. 59]{Nedelec1986}.

The error estimates for the BDM interpolation depending on the geometric regularity are given by the two following theorems, cf. \cite[Theorems 4.3, 4.4]{ApelKempf2020} and \cite[Theorems 6.2, 6.3]{AcostaApelDuranLombardi2011}.
\begin{theorem}
	Let a simplicial element $T$ satisfy the regular vertex property with constant $\bar{c}$. Then for $k\geq 1$, $0\leq m\leq k$ and $\vec{v}\in \vec{W}^{m+1,p}(T)$, $1\leq p\leq\infty$, the estimate
	\begin{equation*}
		\norm{\vec{v}-\BDMIk{k}\vec{v}}_{0,p,T} \lesssim \sum_{\abs{\vec{\alpha}}=m+1} h^{\vec{\alpha}} \norm{D^{\vec{\alpha}}_{\vec{l}}\vec{v}}_{0,p,T} + h_T \sum_{\abs{\vec{\alpha}}=m} h^{\vec{\alpha}} \norm{D^{\vec{\alpha}}_{\vec{l}} \div\vec{v}}_{0,p,T}
	\end{equation*}
	holds, the constant only depends on $\bar{c}$ and $k$. Here $h_T=\diam T$, $h^{\vec{\alpha}} = \prod_{j\in I_d}h_j^{\alpha_j}$ and $D_{\vec{l}}^{\vec{\alpha}} = \frac{\partial^{\abs{\vec{\alpha}}}}{\partial_{\vec{l}_1}^{\alpha_1} \cdots \partial_{\vec{l}_d}^{\alpha_d}}$.
\end{theorem}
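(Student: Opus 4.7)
The plan is to follow the three-step programme outlined just before the theorem: establish a stability and Bramble--Hilbert-type approximation estimate on a fixed reference simplex, transfer it to a member of an axis-aligned anisotropic reference family via a diagonal Piola transform, and finally map to the general simplex using the affine transformation determined by the regular vertex.

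On a fixed reference simplex $\hat T$ I would first show that $\hBDMIk{k}$ is bounded from $\vec{W}^{1,p}(\hat T)$ to $\vec{L}^p(\hat T)$ (augmented with a divergence term for $k\geq 2$). The degrees of freedom from \cite[p.~59]{Nedelec1986} consist of moments of the normal trace on each facet and, for $k\geq 2$, interior moments against polynomials; the facet contributions are controlled by the trace inequality $\norm{\hat{\vec v}\cdot\hat{\vec n}}_{0,p,\partial\hat T}\lesssim \norm{\hat{\vec v}}_{1,p,\hat T}$ and the interior contributions directly by $\norm{\hat{\vec v}}_{0,p,\hat T}+\norm{\hdiv\hat{\vec v}}_{0,p,\hat T}$. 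Since $\hBDMIk{k}$ reproduces polynomials of degree $\leq k$, an anisotropic Deny--Lions / Bramble--Hilbert argument then yields
\[
\norm{\hat{\vec v}-\hBDMIk{k}\hat{\vec v}}_{0,p,\hat T} \lesssim \sum_{\abs{\vec{\alpha}}=m+1}\norm{D^{\vec{\alpha}}\hat{\vec v}}_{0,p,\hat T} + \sum_{\abs{\vec{\alpha}}=m}\norm{D^{\vec{\alpha}}\hdiv\hat{\vec v}}_{0,p,\hat T}.
\]
The decisive point here is that the right-hand side must appear as a sum of \emph{individual} partial-derivative norms, not as the full Sobolev seminorm, because only this form survives the subsequent anisotropic scaling. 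One obtains it by iterating one-dimensional averaging operators in each coordinate direction, the standard Apel-type refinement of Deny--Lions, which is available uniformly for $1\leq p\leq \infty$.

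In the second step I would set $\tilde T = B\hat T$ with $B=\operatorname{diag}(h_1,\ldots,h_d)$ and push $\hat{\vec v}$ forward by the Piola transform $\vec v(\vec x)=(\det B)^{-1}B\,\hat{\vec v}(B^{-1}\vec x)$. This transform commutes with the BDM interpolation and with the divergence, and each axis-aligned derivative $\partial_{\hat x_i}^{\alpha_i}$ scales by $h_i^{\alpha_i}$ while the $\vec{L}^p$ norm scales by $(\det B)^{1/p}$. Feeding the reference estimate through this transform yields
\[
\norm{\vec v-\tBDMIk{k}\vec v}_{0,p,\tilde T} \lesssim \sum_{\abs{\vec{\alpha}}=m+1} h^{\vec{\alpha}}\norm{D^{\vec{\alpha}}\vec v}_{0,p,\tilde T} + h_{\tilde T}\sum_{\abs{\vec{\alpha}}=m} h^{\vec{\alpha}}\norm{D^{\vec{\alpha}}\tdiv\vec v}_{0,p,\tilde T},
\]
after absorbing a factor $\diam\tilde T$ into the divergence term to match the statement. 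Finally, the general element $T$ is the image of $\tilde T$ under the affine map $\vec x=N\tilde{\vec x}+\vec b$ whose columns are the unit edge vectors $\vec l_i$ at the regular vertex; the regular vertex property $\abs{\det N}\geq\bar c$ provides uniform bounds on $N$ and $N^{-1}$, and a second Piola push-forward converts Cartesian derivatives on $\tilde T$ into directional derivatives along $\vec l_i$ on $T$ while preserving the factors $h^{\vec{\alpha}}$.

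The main obstacle is the anisotropic Bramble--Hilbert step on $\hat T$: the classical seminorm version is too blunt, and one has to perform the derivative-by-derivative averaging argument in a way that remains valid uniformly in $1\leq p\leq\infty$, in particular at the endpoints where the trace and Poincar\'e inequalities must be invoked via density arguments. A secondary, but routine, bookkeeping issue is to verify that the Jacobians in the two Piola transforms combine so that the $h_i$ exponents recompose into the multi-index factor $h^{\vec{\alpha}}$ rather than into a crude power of $h_T$; the regular vertex property is precisely what makes this bookkeeping close with a constant depending only on $\bar c$ and $k$.
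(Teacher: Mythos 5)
Your three-step programme (reference element, diagonal scaling to the reference family, affine map controlled by the regular vertex property) is exactly the strategy the paper ascribes to \cite{ApelKempf2020}, and steps two and three are fine. The gap is in your reference-element estimate. You write it with the full vector quantity $\norm{D^{\vec{\alpha}}\hat{\vec{v}}}_{0,p,\hat T}$ on the right-hand side, and you identify the need to keep \emph{individual partial derivatives} rather than the full seminorm; but for a Piola-mapped $\Hdiv$ element that refinement is not enough. Under the diagonal Piola transform the $i$-th component of the error picks up a factor $h_i$, while the $\vec{\alpha}$-derivative of the $j$-th component picks up $h_j^{-1}h^{\vec{\alpha}}$. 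If the bound for component $i$ of the error involves derivatives of component $j\neq i$, the scaled estimate therefore contains the aspect ratio $h_i/h_j$, which is unbounded on anisotropic elements. So the estimate must also be decoupled \emph{by component}: one needs
\begin{equation*}
\norm{(\hat{\vec{v}}-\hBDMIk{k}\hat{\vec{v}})_i}_{0,p,\hat T}\lesssim \sum_{\abs{\vec{\alpha}}=m+1}\norm{D^{\vec{\alpha}}\hat v_i}_{0,p,\hat T}+\sum_{\abs{\vec{\alpha}}=m}\norm{D^{\vec{\alpha}}\hdiv\hat{\vec{v}}}_{0,p,\hat T},
\end{equation*}
i.e.\ only the $i$-th component and the divergence may appear on the right. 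This is precisely what the technical lemma \cite[Lemma 3.1]{ApelKempf2020} (mirrored here by \Cref{lem:BDMI_technical_prism} for prisms) provides: fields supported in a single component and constant in the corresponding coordinate are interpolated into fields of the same structure, which lets one subtract suitable auxiliary functions so that $(\hBDMIk{k}\hat{\vec{v}})_i$ is determined by functionals of $\hat v_i$ and $\hdiv\hat{\vec{v}}$ alone. Without this decoupling step your argument proves the estimate only up to a factor of the aspect ratio, i.e.\ it reduces to the shape-regular case.

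A secondary remark: your bookkeeping for the divergence term actually works out (the factor $h_i\leq h_T$ from the Piola scaling of the error supplies the extra $h_T$; it is not something you need to ``absorb''), and the divergence contribution is needed for all $k\geq 1$, not only $k\geq 2$. The order in which you apply Bramble--Hilbert (on $\hat T$, then scale) differs from the paper's (transfer the \emph{stability} bound to $T$, then apply the anisotropic polynomial approximation argument there), but this is a benign rearrangement once the componentwise reference estimate is in hand.
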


\begin{theorem}
	Let a simplicial element $T$ satisfy the maximum angle condition with constant $\bar{\phi}$. Then for $k\geq 1$, $0\leq m \leq k$ and $\vec{v}\in \vec{W}^{m+1,p}(T)$, $1\leq p\leq\infty$, the estimate
	\begin{equation*}
		\norm{\vec{v}-\BDMIk{k}\vec{v}}_{0,p,T} \lesssim h_T^{m+1} \norm{D^{m+1} \vec{v}}_{0,p,T}
	\end{equation*}
	holds, the constant only depends on $\bar{\phi}$ and $k$. The notation $D^n$ means the sum of the absolute values of all derivatives of order $n$.
\end{theorem}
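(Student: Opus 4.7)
The plan is to deduce Theorem 2 from Theorem 1 via a geometric observation that promotes the maximum angle condition to the regular vertex property, followed by a routine domination of the anisotropic right-hand side. The geometric claim is that under MAC with constant $\bar{\phi}$, at least one vertex of $T$ has outgoing unit edge vectors forming a matrix $N$ with $\abs{\det N}\geq \bar{c}(\bar{\phi},d)>0$. In $d=2$ this is elementary: the vertex opposite the longest edge has interior angle in $[\pi/3,\bar{\phi}]$, so $\abs{\det N}=\sin(\text{angle})\geq \min\{\sin(\pi/3),\sin(\bar{\phi})\}$. In $d=3$ an analogous result is available in the classical literature on anisotropic elements, since the MAC used in the paper controls both face and dihedral angles. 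Theorem 1 then applies with a constant depending only on $\bar{\phi}$, $d$, and $k$.

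It then remains to bound each term of Theorem 1 by $h_T^{m+1}\norm{D^{m+1}\vec{v}}_{0,p,T}$. Since every $h_j\leq h_T$, each $h^{\vec{\alpha}}$ with $\abs{\vec{\alpha}}=m+1$ is at most $h_T^{m+1}$, and likewise $h_T h^{\vec{\alpha}}\leq h_T^{m+1}$ for $\abs{\vec{\alpha}}=m$ in the divergence term. Writing $\pdv{}{\vec{l}_i}=\vec{l}_i\cdot\nabla$ and expanding $D^{\vec{\alpha}}_{\vec{l}}$ presents it as a linear combination of ordinary partial derivatives of order $\abs{\vec{\alpha}}$ with coefficients that are products of unit-vector entries, hence bounded in terms of $d$. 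Therefore $\norm{D^{\vec{\alpha}}_{\vec{l}}\vec{v}}_{0,p,T}\lesssim \norm{D^{m+1}\vec{v}}_{0,p,T}$, and the divergence contribution is absorbed through $\norm{D^{\vec{\alpha}}_{\vec{l}}\div\vec{v}}_{0,p,T}\lesssim\norm{D^m\div\vec{v}}_{0,p,T}\lesssim\norm{D^{m+1}\vec{v}}_{0,p,T}$; summing the finitely many multi-indices closes the estimate.

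The hard part is the geometric lemma in dimension three, where the reduction from MAC to RVP is sensitive to the precise formulation of MAC adopted. If it cannot be secured in the desired generality, the alternative is to follow the two-step reference-family strategy announced in the paper and redo the second, now non-diagonal, affine transfer directly under MAC: one performs the Piola transform on the affine map from an element of the reference family to $T$ and exploits the cancellation between its Jacobian determinant and the Piola scaling, producing the $h_T^{m+1}$ factor without passing through anisotropic increments.
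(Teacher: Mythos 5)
Your second half is fine: once an estimate of the form in Theorem~1 is available, bounding every $h^{\vec{\alpha}}$ with $\abs{\vec{\alpha}}=m+1$ (and $h_T h^{\vec{\alpha}}$ with $\abs{\vec{\alpha}}=m$) by $h_T^{m+1}$, rewriting directional derivatives along unit vectors as bounded combinations of Cartesian partials, and absorbing $\norm{D^{m}\div\vec{v}}_{0,p,T}$ into $\norm{D^{m+1}\vec{v}}_{0,p,T}$ is routine. The gap is the geometric reduction that the whole plan rests on. In $d=2$ your longest-edge argument is correct, and there the maximum angle condition and the regular vertex property are indeed equivalent. In $d=3$ the claim that MAC implies RVP at some vertex with $\bar{c}=\bar{c}(\bar{\phi})$ is \emph{false}, and the classical literature you invoke in its support actually asserts the opposite: the regular vertex property is strictly stronger than the maximum angle condition. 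A standard counterexample is the family of tetrahedra with vertices $(0,0,0)$, $(h_1,0,0)$, $(0,h_2,0)$, $(h_1,h_2,h_3)$ with $h_3\ll h_2\ll h_1$: all face and dihedral angles stay uniformly bounded away from $\pi$, yet at each of the four vertices the three outgoing unit edge vectors become asymptotically coplanar, with $\abs{\det N}\sim h_3/h_1\to 0$. For such elements Theorem~1 carries no uniform constant, so Theorem~2 cannot be obtained as its corollary; this is exactly why the paper and its sources state two separate theorems with different hypotheses and different (sharp anisotropic versus isotropic-rate) conclusions.

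Your fallback paragraph points in the right direction but does not close the gap. The proofs the paper refers to handle the MAC case by a genuinely separate argument: after relabeling, a MAC-tetrahedron either satisfies RVP (where your domination argument applies) or belongs to a residual class, represented by elements like the one above, for which the reference-family construction and the stability/Bramble--Hilbert step must be redone on a second reference configuration in which no vertex is regular. The slogan about ``cancellation between the Jacobian determinant and the Piola scaling'' does not supply that missing case analysis. As written, the proposal proves the theorem for $d=2$ only and has a genuine gap for $d=3$.
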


\section{Error estimates on anisotropic prisms}
Similar results can be achieved for triangular prisms with some modifications.
The prismatic reference element $\hat{P}$ with the notation for the vertices is given on the left hand side of \Cref{fig:prism_element}.
\begin{figure}[t]
	\tikzsetnextfilename{Prism_reference_element}
\begin{tikzpicture}[scale=1]
	\begin{axis}[
		axis lines=center,
		view={120}{35},
		xtick={1}, ytick={1}, ztick={1},
		xtick align=outside, ytick align=outside, ztick align=outside,
		xticklabel style={anchor=east}, yticklabel style={anchor=west},
		xlabel=$\hat{x}_1$, ylabel=$\hat{x}_2$,zlabel=$\hat{x}_3$,
		xlabel style={anchor=east}, ylabel style={anchor=south west}, zlabel style={anchor=south east},
		xmin=0, xmax=1.5,
		ymin=0, ymax=1.5,
		zmin=0, zmax=1.5,
		scale only axis,
		width=0.3\textwidth,
		height=0.3\textwidth,
	]
		\draw[thick, blue, rounded corners=0.1pt] (1,0,1) -- (0,0,1) -- (0,1,1) -- (0,1,0) -- (1,0,0) -- (1,0,1) -- (0,1,1);
		\draw[thick, blue, densely dashed] (1,0,0) -- (0,0,0) -- (0,1,0);
		\draw[thick, blue, densely dashed] (0,0,0) -- (0,0,1);
		\node[anchor=north west] at (1,0,0) {$\vec{p}_1$};
		\node[anchor=north east] at (0,1,0) {$\vec{p}_2$};
		\node[anchor=east] at (0,0,0) {$\vec{p}_3$};
		\node[anchor=east] at (1,0,1) {$\vec{p}^1$};
		\node[anchor=north east] at (0,1,1) {$\vec{p}^2$};
		\node[anchor=south west] at (0,0,1) {$\vec{p}^3$};
		\node at (0,1.25,1.25) {$\hat{P}$};
	\end{axis}
	
\end{tikzpicture}%
	\hfill
	\begin{tikzpicture}
	\draw[white] (0,0) -- (1,0);
	\draw[-stealth] (0,2) -- node[above] {$J_{\tilde{P}}$} (2,2);
	\end{tikzpicture}
	\hfill
	\tikzsetnextfilename{Prism_element}
\begin{tikzpicture}[scale=1]
	\begin{axis}[
		axis lines=center,
		view={120}{35},
		xtick={0.2}, ytick={0.1}, ztick={1},
		xticklabels={$h_1$}, yticklabels={$h_2$}, zticklabels={$h_3$},
		xtick align=outside, ytick align=outside, ztick align=outside,
		xticklabel style={anchor=east}, yticklabel style={anchor=west},
		xlabel=$\tilde{x}_1$, ylabel=$\tilde{x}_2$, zlabel=$\tilde{x}_3$,
		xlabel style={anchor=east}, ylabel style={anchor=south west}, zlabel style={anchor=south east},
		xmin=0, xmax=1.5,
		ymin=0, ymax=1.5,
		zmin=0, zmax=1.5,
		scale only axis,
		width=0.3\textwidth,
		height=0.3\textwidth,
	]		
		\draw[thick, blue, rounded corners=0.1pt] (0.2,0,1) -- (0,0,1) -- (0,0.1,1) -- (0,0.1,0) -- (0.2,0,0) -- (0.2,0,1) -- (0,0.1,1);
		\draw[thick, blue, densely dashed] (0.2,0,0) -- (0,0,0) -- (0,0.1,0);
		\draw[thick, blue, densely dashed] (0,0,0) -- (0,0,1);
		\node at (0,1.25,1.25) {$\tilde{P}$};
	\end{axis}
\end{tikzpicture}%
	\caption{Reference prism $\hat{P}$ with vertex numbering, and a transformed prism of the reference family $\mathcal{R}_P$.}\label{fig:prism_element}
\end{figure}
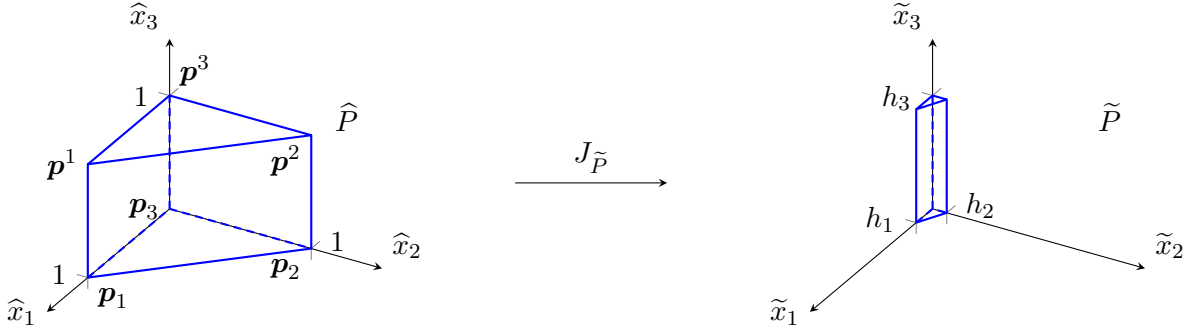
The element $\hat{P}$ is transformed to an element $\tilde{P}$ of the reference family $\mathcal{R}_P$ by the transformation $\tilde{\vec{x}} = J_{\tilde{P}} \hat{\vec{x}}$, where
\begin{equation}\label{eq:trafo}
	J_{\tilde{P}} = \begin{pmatrix}h_1&0&0\\0&h_2&0\\0&0&h_3\end{pmatrix},
\end{equation}
see \Cref{fig:prism_element}.
The vertical facet of $\hat{P}$ opposite of the vertices $\vec{p}_i$ and $\vec{p}^i$ is denoted by $e_i$, the horizontal facet at $\hat{x}_3 = 0$ by $e_b$ and the one at $\hat{x}_3 = 1$ by $e_t$.
The facet normals of $\hat{P}$ are thus given by
\begin{equation*}
	\vec{n}_{\hat{P},e_1} = \begin{pmatrix}-1\\0\\0\end{pmatrix}, \quad \vec{n}_{\hat{P},e_2} = \begin{pmatrix}0\\-1\\0\end{pmatrix}, \quad \vec{n}_{\hat{P},e_3} = \frac{1}{\sqrt{2}}\begin{pmatrix}1\\1\\0\end{pmatrix}, \quad \vec{n}_{\hat{P},e_b} = \begin{pmatrix}0\\0\\-1\end{pmatrix}, \quad \vec{n}_{\hat{P},e_t} = \begin{pmatrix}0\\0\\1\end{pmatrix}.
\end{equation*}

The local BDM interpolation operator $\BDMIk{k}$ of order $k$ on a prism $P$ maps into the space $\Polyvec{k,k}{P}$ of vector valued polynomials of total degree $k$ in $x_1$ and $x_2$ and degree $k$ in $x_3$.
It is defined by the functionals, see \cite[p. 64]{Nedelec1986},
\begin{subequations}\label{eq:interpolation_relations}
	\begin{align}
		\int_{e_i} \BDMIk{k}\vec{v} \cdot \vec{n}_{P,e_i} z \dd{\vec{s}} &= \int_{e_i} \vec{v} \cdot \vec{n}_{P,e_i} z \dd{\vec{s}} &&\forall z\in \Poly{k}{e_i}, \quad i\in \{b,t\}, \\
		\int_{e_i} \BDMIk{k}\vec{v} \cdot \vec{n}_{P,e_i} z \dd{\vec{s}} &= \int_{e_i} \vec{v} \cdot \vec{n}_{P,e_i} z \dd{\vec{s}} &&\forall z\in Q_k(e_i), \quad i\in I_3, \\
		\int_P (\BDMIk{k}\vec{v})_3 z_3 \dd{\vec{x}} &= \int_P v_3 z_3 \dd{\vec{x}} &&\forall z_3\in \Poly{k,k-2}{P}, \\
		\int_P (\BDMIk{k}\vec{v})_1 z_1 + (\BDMIk{k}\vec{v})_2 z_2 \dd{\vec{x}} &= \int_P v_1 z_1 + v_2 z_2 \dd{\vec{x}} &&\forall (z_1,z_2)\in \mathcal{P}_{k-1,k}(P), 
	\end{align}
\end{subequations}
where $P_k(e)$ is the space of polynomials with maximal degree $k$, $Q_k(e)$ is the space of polynomials with degree $k$ in each of the dimensions of the facet $e$, $\Poly{m,n}{P}$ is the space of polynomials with total degree $m$ in $x_1$ and $x_2$, and degree $n$ in $x_3$.
The space $\mathcal{P}_{m,n}(P)$ consists of pairs of polynomials with degree $n$ in $x_3$, which are for fixed $x_3$, i.e., on the triangle $T_{x_3}$, in the space $\vec{N}_{m}(T_{x_3}) = \vec{P}_{m-1}(T_{x_3}) \oplus \vec{S}_{m}(T_{x_3})$, where $\vec{S}_{m}(T_{x_3}) = \{\vec{p}\in \vec{P}_{m}(T_{x_3}):\vec{p}(\vec{x})\cdot \vec{x}=0\ \forall\vec{x}\in T_{x_3}\}$.

We first require the analog of \cite[Lemma 3.1]{ApelKempf2020} for the prism reference element.
The proof is omitted here for brevity since it follows along the same steps as the proof for the analogous lemma for simplices.
\begin{lemma}\label{lem:BDMI_technical_prism}
	Let $\hat{P}$ be the reference element from \Cref{fig:prism_element}, $\hat{f}_j\in L^p(e_j)$, $j\in I_2$, $\hat{f}_3\in L^p(e_b)$, $1\leq p\leq\infty$, and 
	\begin{align*}
		&\hat{\vec{u}}(\hat{\vec{x}}) = \begin{pmatrix} \hat{f}_1(\hat{x}_2,\hat{x}_3),0,0\end{pmatrix}^T,&&\hat{\vec{v}}(\hat{\vec{x}}) = \begin{pmatrix}0, \hat{f}_2(\hat{x}_1,\hat{x}_3),0\end{pmatrix}^T,&&\hat{\vec{w}}(\hat{\vec{x}}) = \begin{pmatrix}0,0,\hat{f}_3(\hat{x}_1,\hat{x}_2)\end{pmatrix}^T.
	\end{align*}
	Then there are functions $\hat{q}_j\in P_k(e_j)$, $j\in I_2$, $\hat{q}_3\in P_k(e_b)$, so that
	\begin{align*}
		&\hBDMIk{k}\hat{\vec{u}} = \begin{pmatrix} \hat{q}_1(\hat{x}_2,\hat{x}_3),0,0\end{pmatrix}^T,&&\hBDMIk{k}\hat{\vec{v}} = \begin{pmatrix}0,\hat{q}_2(\hat{x}_1,\hat{x}_3),0\end{pmatrix}^T,&& \hBDMIk{k}\hat{\vec{w}} = \begin{pmatrix}0,0,\hat{q}_3(\hat{x}_1,\hat{x}_2)\end{pmatrix}^T.
	\end{align*}		
\end{lemma}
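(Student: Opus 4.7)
The plan is to apply the standard unisolvence argument for the BDM interpolation: for each of the three inputs I would construct an explicit candidate of the claimed structural form in $\Polyvec{k,k}{\hat{P}}$, check that it reproduces every functional in \eqref{eq:interpolation_relations} applied to the corresponding input, and then conclude by the unisolvence of the BDM degrees of freedom proved in \cite[p.~64]{Nedelec1986}.

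Starting with $\hat{\vec{u}} = (\hat{f}_1(\hat{x}_2, \hat{x}_3), 0, 0)^T$, I take the ansatz $\hat{\vec{r}} := (\hat{q}_1(\hat{x}_2, \hat{x}_3), 0, 0)^T$ with $\hat{q}_1$ of degree at most $k$ in each of $\hat{x}_2$ and $\hat{x}_3$. The normal components of $\hat{\vec{u}}$ and $\hat{\vec{r}}$ vanish identically on $e_b$, $e_t$ and $e_2$, so relation (a) and the case $i=2$ of (b) hold trivially, and (c) is trivial because the third components vanish on both sides. The non-trivial boundary conditions live on $e_1$ and $e_3$. Parametrizing $e_3$ by $(\hat{x}_2, \hat{x}_3)$ via $\hat{x}_1 = 1 - \hat{x}_2$, the surface Jacobian $\sqrt{2}$ neatly cancels the factor $1/\sqrt{2}$ in $\vec{n}_{\hat{P}, e_3}$, so both facet conditions collapse to the same moment equation
\begin{equation*}
\int_0^1\!\!\int_0^1 (\hat{q}_1 - \hat{f}_1)(\hat{x}_2, \hat{x}_3)\, z(\hat{x}_2, \hat{x}_3) \dd{\hat{x}_2}\dd{\hat{x}_3} = 0 \quad \forall z \in Q_k,
\end{equation*}
which determines $\hat{q}_1$ uniquely by projection of $\hat{f}_1$ onto $Q_k$ on the unit square.

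It remains to verify the interior condition (d), which, with $(\hat{\vec{r}})_2 = (\hat{\vec{u}})_2 = 0$, reduces to $\int_{\hat{P}} (\hat{q}_1 - \hat{f}_1) z_1 \dd{\hat{\vec{x}}} = 0$ for every first component $z_1$ of $(z_1, z_2) \in \mathcal{P}_{k-1, k}(\hat{P})$. Integrating out $\hat{x}_1$ rewrites this as $\int_{[0,1]^2}(\hat{q}_1 - \hat{f}_1)\bar{z}_1 \dd{\hat{x}_2}\dd{\hat{x}_3} = 0$ with $\bar{z}_1(\hat{x}_2, \hat{x}_3) := \int_0^{1-\hat{x}_2} z_1(\hat{x}_1, \hat{x}_2, \hat{x}_3) \dd{\hat{x}_1}$. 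The main technical step is to show that $\bar{z}_1 \in Q_k([0,1]^2)$: by the decomposition $\vec{N}_{k-1}(T_{\hat{x}_3}) = \vec{P}_{k-2}(T_{\hat{x}_3}) \oplus \vec{S}_{k-1}(T_{\hat{x}_3})$, the first component $z_1$ has total degree at most $k-1$ in $(\hat{x}_1, \hat{x}_2)$, and integrating $\hat{x}_1$ over $[0, 1-\hat{x}_2]$ raises the $\hat{x}_2$-degree by exactly one while leaving the $\hat{x}_3$-degree (at most $k$) unchanged, yielding $\bar{z}_1 \in Q_k$. Orthogonality of $\hat{q}_1 - \hat{f}_1$ to $\bar{z}_1$ then follows from the projection already established on the boundary, so (d) is automatic.

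The case $\hat{\vec{v}}$ is symmetric to that for $\hat{\vec{u}}$ under the exchange $(\hat{x}_1, e_1) \leftrightarrow (\hat{x}_2, e_2)$. For $\hat{\vec{w}} = (0, 0, \hat{f}_3(\hat{x}_1, \hat{x}_2))^T$ with candidate $(0, 0, \hat{q}_3(\hat{x}_1, \hat{x}_2))^T$ the situation is cleaner: the normal components on all three vertical facets vanish identically so (b) is automatic, and (d) is trivial because the first two components vanish on both sides; the conditions (a) on $e_b$ and $e_t$ together with the interior condition (c) all reduce, after integrating out $\hat{x}_3$, to the same projection of $\hat{f}_3$ onto $\Poly{k}{e_b}$, uniquely determining $\hat{q}_3$.
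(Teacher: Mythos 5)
Your proposal is correct and takes essentially the same route as the paper's (omitted) proof, namely the standard argument of exhibiting an explicit candidate of the claimed form, verifying every functional in \eqref{eq:interpolation_relations}, and invoking unisolvence, exactly as in the simplicial analog \cite[Lemma 3.1]{ApelKempf2020} to which the paper defers. The only discrepancy is with the lemma's statement rather than with your argument: your construction yields $\hat{q}_1,\hat{q}_2$ in $Q_k$ of the respective quadrilateral facets (which is precisely what membership of the interpolant in $\Polyvec{k,k}{\hat{P}}$ forces for a first component independent of $\hat{x}_1$), while the lemma writes $P_k(e_j)$; the structural claim actually used downstream is unaffected.
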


With this lemma, we get the stability estimate on the reference element.
\begin{lemma}\label{lem:stability_BDM_reference_element_prism}
	Let $\hat{\vec{u}}\in\vec{W}^{1,p}(\hat{P})$, $1\leq p\leq\infty$. Then we have the estimates
	\begin{subequations}
		\begin{align*}
			\norm{(\hBDMIk{k} \hat{\vec{u}})_i}_{0,p,\hat{P}} &\lesssim \norm{\hat{u}_i}_{1,p,\hat{P}} + \norm{\hdiv\hat{\vec{u}}}_{0,p,\hat{P}} + \norm{\pdv{\hat{u}_3}{\hat{x}_3}}_{0,p,\hat{P}}, \qquad i\in I_2,\\
			\norm{(\hBDMIk{k} \hat{\vec{u}})_3}_{0,p,\hat{P}} &\lesssim \norm{\hat{u}_3}_{1,p,\hat{P}} + \norm{\hdiv\hat{\vec{u}}}_{0,p,\hat{P}}.
		\end{align*}	
	\end{subequations}
\end{lemma}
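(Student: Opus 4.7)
The plan is to follow the standard Fortin-type stability strategy used for simplices in \cite[Lemma 3.2]{ApelKempf2020}, adapted to the prism geometry and supported by the component-preserving property from \Cref{lem:BDMI_technical_prism}. Since $\hBDMIk{k}\hat{\vec{u}}$ lives in the finite-dimensional polynomial space $\Polyvec{k,k}{\hat{P}}$, its $\vec{L}^p$-norm is equivalent to the vector of coefficients with respect to any basis. Fixing a basis dual to (a subset of) the degrees of freedom in \eqref{eq:interpolation_relations}, the task reduces to bounding each DOF-value by the Sobolev-type norms on the right-hand side. \Cref{lem:BDMI_technical_prism} is used here to identify, for each component index $i$, which DOFs actually contribute to $(\hBDMIk{k}\hat{\vec{u}})_i$; this is what keeps the $\vec{W}^{1,p}$-norms of the other components from appearing in the bound.

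For the third-component estimate the relevant DOFs are the normal-trace integrals on the horizontal facets $e_b$ and $e_t$ (where $\vec{n}=\pm\vec{e}_3$, so that only $\hat{u}_3$ enters) and the interior integrals against $z_3\in\Poly{k,k-2}{\hat{P}}$ from \eqref{eq:interpolation_relations}. A trace inequality controls the first group by $\norm{\hat{u}_3}_{1,p,\hat{P}}$, and H\"older's inequality controls the second by $\norm{\hat{u}_3}_{0,p,\hat{P}}$. The vertical-facet DOFs, which couple to $\hat{u}_1$ and $\hat{u}_2$, are handled by a divergence-theorem identity applied to a polynomial extension $\tilde{z}$ of the facet test function into $\hat{P}$, which converts the $\hat{u}_1,\hat{u}_2$ boundary data into a volume integral of $\hdiv\hat{\vec{u}}$ plus correction terms that can be re-absorbed into a $\hat{u}_3$-norm. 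This accounts for the $\norm{\hdiv\hat{\vec{u}}}_{0,p,\hat{P}}$ contribution in the second inequality.

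For $i\in\{1,2\}$ the same philosophy applies, but the slanted facet $e_3$ with normal $\tfrac{1}{\sqrt{2}}(1,1,0)^T$ mixes $\hat{u}_1$ and $\hat{u}_2$, and the interior DOFs in \eqref{eq:interpolation_relations}d do the same. After extending the facet test function $z$ on $e_3$ to some $\tilde{z}$ on $\hat{P}$ vanishing on the remaining facets, one uses
\begin{equation*}
\int_{e_3}\hat{\vec{u}}\cdot\vec{n}_{\hat{P},e_3}\,z\dd{\vec{s}}
= \int_{\hat{P}}\hdiv\hat{\vec{u}}\,\tilde{z}\dd{\vec{x}}
+ \int_{\hat{P}}\hat{\vec{u}}\cdot\nabla\tilde{z}\dd{\vec{x}}
\end{equation*}
and a further integration by parts in $\hat{x}_j$ ($j\neq i$) to re-express the unwanted $\hat{u}_j$-contribution in terms of $\pdv{\hat{u}_j}{\hat{x}_j}$. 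Invoking the identity $\pdv{\hat{u}_j}{\hat{x}_j}=\hdiv\hat{\vec{u}}-\pdv{\hat{u}_i}{\hat{x}_i}-\pdv{\hat{u}_3}{\hat{x}_3}$ then forces the appearance of both $\norm{\hdiv\hat{\vec{u}}}_{0,p,\hat{P}}$ and $\norm{\pdv{\hat{u}_3}{\hat{x}_3}}_{0,p,\hat{P}}$ on the right-hand side, while $\pdv{\hat{u}_i}{\hat{x}_i}$ is absorbed into $\norm{\hat{u}_i}_{1,p,\hat{P}}$.

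The main obstacle will be the bookkeeping: the extensions $\tilde{z}$ and the dual basis have to be coordinated so that, after these manipulations, no $\norm{\hat{u}_j}_{1,p,\hat{P}}$ with $j\neq i$ and, crucially, no full $\norm{\hat{u}_3}_{0,p,\hat{P}}$ term unrelated to $\hdiv\hat{\vec{u}}$ survive in the bound for $i\in\{1,2\}$. This is precisely the step that uses the component-preserving structure from \Cref{lem:BDMI_technical_prism} in an essential way, and it is where the prism argument differs from both the simplex \cite{ApelKempf2020} and the parallelotope \cite{Franz2021} cases. Once this is settled, the remaining estimates reduce to standard norm equivalences and trace inequalities on the fixed reference prism $\hat{P}$, yielding a constant that depends only on $k$ and $p$.
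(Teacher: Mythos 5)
Your toolkit (the component-preserving property of \Cref{lem:BDMI_technical_prism}, the divergence theorem on the slanted facet, norm equivalence on the finite-dimensional space) is the right one, but the organizing principle --- bound each degree of freedom of $\hat{\vec{u}}$ individually by the component-restricted right-hand side --- cannot work, and the step you defer as ``bookkeeping'' is exactly where it breaks. Take $\hat{\vec{u}}=(0,c,0)^T$ with $c$ a constant: then $\hat{u}_1\equiv 0$, $\hdiv\hat{\vec{u}}=0$ and $\partial\hat{u}_3/\partial\hat{x}_3=0$, yet the slanted-facet functional $\int_{e_3}\hat{\vec{u}}\cdot\vec{n}_{\hat{P},e_3}\,z\dd{\vec{s}}=\tfrac{c}{\sqrt{2}}\int_{e_3}z\dd{\vec{s}}$ and the interior functionals $\int_{\hat{P}}\hat{u}_2z_2\dd{\vec{x}}$ are nonzero; moreover a symmetry argument ($\hat{x}_1\leftrightarrow\hat{x}_2$) shows the dual basis function attached to the $e_3$ functional with $z\equiv 1$ cannot have vanishing first component. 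So these functionals genuinely feed into $(\hBDMIk{k}\hat{\vec{u}})_1$, they are not bounded by your right-hand side, and your integration-by-parts identity does not help: for this example it merely cycles $\int_{e_3}\hat{u}_2 z$ back to itself. The first component of the interpolant is nonetheless controlled (here it is $0$, since the interpolant of a constant field is the field itself) only because of cancellation between functionals, which any term-by-term estimate discards. A secondary problem is that an extension $\tilde{z}$ of $z\in Q_k(e_3)$ vanishing on all remaining facets does not exist unless $z$ vanishes on $\partial e_3$, so the boundary term on $e_2$, which carries an uncontrollable trace of $\hat{u}_2$, cannot be suppressed.

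The missing idea is to modify $\hat{\vec{u}}$ \emph{before} evaluating the functionals, subtracting functions whose interpolant has vanishing first component, so that the problematic functionals of the difference are identically zero rather than merely ``estimated''. Concretely, one first subtracts $\hat{\vec{u}}_*=\bigl(0,\hat{u}_2(\hat{x}_1,0,\hat{x}_3),\hat{u}_3(\hat{x}_1,\hat{x}_2,0)\bigr)^T$: by \Cref{lem:BDMI_technical_prism} this leaves $(\hBDMIk{k}\hat{\vec{u}})_1$ unchanged, it does not change the divergence, it annihilates the traces of the second component on $e_2$ and of the third on $e_b$, and --- crucially --- the remainder $\hat{v}_3=\hat{u}_3-\hat{u}_3(\hat{x}_1,\hat{x}_2,0)=\int_0^{\hat{x}_3}\partial\hat{u}_3/\partial\hat{x}_3\dd{t}$ is controlled by $\norm{\pdv{\hat{u}_3}{\hat{x}_3}}_{0,p,\hat{P}}$ alone, which is how the full $\norm{\hat{u}_3}_{1,p,\hat{P}}$ is kept out of the bound for $i\in I_2$. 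One then subtracts a polynomial $\hat{\vec{v}}_*=(0,\hat{x}_2\hat{q}_2,\hat{x}_3\hat{q}_3)^T$ defined by weighted $L^2$-projections; it is reproduced exactly by $\hBDMIk{k}$ and kills the interior moments of the second and third components. Only after these two corrections do all surviving functionals involve nothing but $\hat{u}_1$, $\hdiv\hat{\vec{u}}$ and $\partial\hat{u}_3/\partial\hat{x}_3$, at which point your trace-theorem and norm-equivalence endgame goes through. Without this construction the argument does not close.
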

\begin{proof}
	The proofs for $i\in I_2$ are analogous so we only show the details for the first and third components, starting with $i=1$.
	Let
	\begin{align*}
		&\hat{\vec{u}}_* = \begin{pmatrix}0\\\hat{u}_2(\hat{x}_1,0,\hat{x}_3)\\\hat{u}_3(\hat{x}_1,\hat{x}_2,0)\end{pmatrix}, &&\hat{\vec{v}}=\hat{\vec{u}}-\hat{\vec{u}}_* = \begin{pmatrix}\hat{u}_1\\\hat{u}_2-\hat{u}_2(\hat{x}_1,0,\hat{x}_3)\\\hat{u}_3-\hat{u}_3(\hat{x}_1,\hat{x}_2,0)\end{pmatrix}.
	\end{align*}
	The previous \Cref{lem:BDMI_technical_prism} thus yields $(\hBDMIk{k} \hat{\vec{v}})_1 = (\hBDMIk{k}\hat{\vec{u}})_1$, and it holds $\hdiv\hat{\vec{v}} = \hdiv\hat{\vec{u}} - \hdiv\hat{\vec{u}}_* = \hdiv\hat{\vec{u}}$. 
	We define the two functions
	\begin{align*}
		&\hat{\vec{v}}_* = \begin{pmatrix}0\\\hat{x}_2 \hat{q}_2\\\hat{x}_3\hat{q}_3\end{pmatrix}, &&\hat{\vec{w}}=\hat{\vec{v}}-\hat{\vec{v}}_* = \begin{pmatrix}\hat{v}_1\\\hat{v}_2-\hat{x}_2\hat{q}_2\\\hat{v}_3-\hat{x}_3\hat{q}_3\end{pmatrix},
	\end{align*}
	where $\hat{q}_2\in \Poly{k-1,k}{\hat{P}}$, $\hat{q}_3\in \Poly{k,k-1}{\hat{P}}$, so that
	\begin{align*}
		\int_{\hat{P}} \hat{w}_2 z \dd{\vec{x}}=\int_{\hat{P}} (\hat{v}_2 - \hat{x}_2\hat{q}_2)z \dd{\vec{x}}=0 \qquad \forall z\in \Poly{k-1,k}{\hat{P}}, \\
		\int_{\hat{P}} \hat{w}_3 z \dd{\vec{x}}=\int_{\hat{P}} (\hat{v}_3 - \hat{x}_3\hat{q}_3)z \dd{\vec{x}}=0 \qquad \forall z\in \Poly{k,k-1}{\hat{P}}.
	\end{align*}
	This means that the functions $\hat{q}_2$ and $\hat{q}_3$ are well defined.		
	Since $\hat{\vec{v}}_*\in\Polyvec{k,k}{\hat{P}}$ it follows that $\hBDMIk{k}\hat{\vec{v}}_* = \hat{\vec{v}}_*$ and thus $(\hBDMIk{k} \hat{\vec{w}})_1 = (\hBDMIk{k} \hat{\vec{v}})_1 = (\hBDMIk{k} \hat{\vec{u}})_1$.
	The interpolated function $\hBDMIk{k} \hat{\vec{w}} = \hat{\vec{t}} = (\hat{t}_1,\hat{t}_2,\hat{t}_3)^T$ is then defined by the relations, see \eqref{eq:interpolation_relations},
	\begingroup
	\allowdisplaybreaks
	\begin{align*}
		\int_{e_b} \hat{t}_3 z \dd{\vec{s}} &= \int_{e_b} \hat{w}_3 z \dd{\vec{s}} = 0 && \forall z\in \Poly{k}{e_b}, \\
		\int_{e_t} \hat{t}_3 z \dd{\vec{s}} &= \int_{e_t} \hat{w}_3 z \dd{\vec{s}} = \int_{e_t} \hat{w}_3 z \dd{\vec{s}} - \int_{e_b} \hat{w}_3 z \dd{\vec{s}} = \int_{\hat{P}} \pdv{\hat{w}_3}{\hat{x}_3} z \dd{\vec{x}}  && \forall z\in \Poly{k}{e_t}, \\
		\int_{e_1} \hat{t}_1 z \dd{\vec{s}} &= \int_{e_1} \hat{w}_1 z \dd{\vec{s}} && \forall z \in Q_k(e_1),\\
		\int_{e_2} \hat{t}_2 z \dd{\vec{s}} &= \int_{e_2} \hat{w}_2 z \dd{\vec{s}} = 0 && \forall z \in Q_k(e_2), \\
		\int_{e_3} (\hat{t}_1 + \hat{t}_2) z \dd{\vec{s}} &= \int_{e_3} (\hat{w}_1 + \hat{w}_2) z \dd{\vec{s}}  && \forall z \in Q_k(e_3), \\
		\int_{\hat{P}} \hat{t}_3 z_3 \dd{\vec{x}} &= \int_{\hat{P}} \hat{w}_3 z_3 \dd{\vec{x}} = \int_{\hat{P}} (\hat{v}_3 - \hat{x}_3 \hat{q}_3) z_3 \dd{\vec{x}} = 0 &&\forall z_3 \in \Poly{k,k-2}{\hat{P}}, \\
		\int_{\hat{P}} \hat{t}_1 z_1 + \hat{t}_2 z_2 \dd{\vec{x}} &= \int_{\hat{P}} \hat{w}_1 z_1 + \hat{w}_2 z_2 \dd{\vec{x}} = \int_{\hat{P}} \hat{w}_1 z_1 \dd{\vec{x}} &&\forall (z_1,z_2) \in \mathcal{P}_{k-1,k}(\hat{P}),
	\end{align*}
	\endgroup
	where the definitions of $\hat{q}_2$, $\hat{q}_3$, and that $\hat{w}_2|_{e_2} \equiv 0$, $\hat{w}_3|_{e_b} \equiv 0$ were used.
	Additional computations for the relation on $e_3$ yield
	\begin{align*}
		\frac{1}{\sqrt{2}}\int_{e_3} (\hat{t}_1 + \hat{t}_2) z \dd{\vec{s}} &= \frac{1}{\sqrt{2}}\int_{e_3} (\hat{w}_1 + \hat{w}_2) z \dd{\vec{s}} = \int_{e_3} \hat{\vec{w}} \cdot \vec{n}_{e_3} \dd{\vec{s}} \\
		&= \int_{\hat{P}} \left(\pdv{\hat{w}_1}{\hat{x}_1} + \pdv{\hat{w}_2}{\hat{x}_2}\right) z \dd{\vec{x}} + \int_{\hat{P}} \hat{w}_1 \pdv{z}{\hat{x}_1} \dd{\vec{x}} - \int_{\partial\hat{P}\setminus e_3} \hat{\vec{w}}\cdot \vec{n}_{\partial\hat{P}} z \dd{\vec{s}}\\
		&= \int_{\hat{P}} \left( \hdiv\hat{\vec{w}} -2\pdv{\hat{w}_3}{\hat{x}_3}\right) \dd{\vec{x}} + \int_{\hat{P}} \hat{w}_1 \pdv{z}{\hat{x}_1} \dd{\vec{x}} - \int_{e_1} \hat{w}_1 z\dd{\vec{s}}.
	\end{align*}
	Thus, the terms
	\begin{subequations}
		\begin{align*}
			\int_{e_1} \hat{w}_1 z \dd{\vec{s}} &= \int_{e_1} \hat{u}_1 z \dd{\vec{s}},&
			\int_{\hat{P}} \hat{w}_1 z \dd{\vec{x}} &= \int_{\hat{P}} \hat{u}_1 z \dd{\vec{x}}, \\
			\int_{\hat{P}} \pdv{\hat{w}_3}{\hat{x}_3} z \dd{\vec{x}} &= \int_{\hat{P}} \pdv{\hat{u}_3-\hat{x}_3\hat{q}_3}{\hat{x}_3} z \dd{\vec{x}},&
			\int_{\hat{P}} (\hdiv \hat{\vec{w}}) z \dd{\vec{x}} &= \int_{\hat{P}} \hdiv (\hat{\vec{u}} - \hat{\vec{v}}_*) z\dd{\vec{x}}
		\end{align*}
	\end{subequations}
	define the interpolant. We get the desired estimate
	\begin{equation*}
		\norm{(\hBDMIk{k} \hat{\vec{u}})_1}_{0,p,\hat{P}} \lesssim \norm{\hat{u}_1}_{1,p,\hat{P}} + \norm{\hdiv\hat{\vec{u}}}_{0,p,\hat{P}} + \norm{\pdv{\hat{u}_3}{\hat{x}_3}}_{0,p,\hat{P}}
	\end{equation*}
	using a trace theorem and steps similar to those in the proof of \cite[Lemma 3.3]{AcostaApelDuranLombardi2011} to estimate the terms involving $\hat{x}_3\hat{q}_3$ and $\hdiv \hat{\vec{v}}_*$.	
	For the third component, with the definitions
	\begin{align*}
		&\hat{\vec{v}}=\hat{\vec{u}}-\hat{\vec{u}}_* = \begin{pmatrix}\hat{u}_1-\hat{u}_1(0,\hat{x}_2,\hat{x}_3)\\\hat{u}_2-\hat{u}_2(\hat{x}_1,0,\hat{x}_3)\\\hat{u}_3\end{pmatrix}, && \hat{\vec{v}}_* = \begin{pmatrix} \hat{x}_1\hat{q}_1\\ \hat{x}_2\hat{q}_2\\ 0	\end{pmatrix}, &&\hat{\vec{w}}=\hat{\vec{v}} - \hat{\vec{v}}_* = \begin{pmatrix}\hat{v}_1-\hat{x}_1\hat{q}_1\\\hat{v}_2-\hat{x}_2\hat{q}_2\\\hat{v}_3\end{pmatrix},
	\end{align*}
	where the functions $\hat{q}_j \in \Poly{k-1,k}{\hat{P}}$, $j\in I_2$, are now defined by
	\begin{align*}
		\int_{\hat{P}} \hat{w}_j z \dd{\vec{x}}=\int_{\hat{P}} (\hat{v}_j - \hat{x}_j\hat{q}_j)z \dd{\vec{x}}=0 \qquad \forall z\in \Poly{k-1,k}{\hat{P}},
	\end{align*}
	the relevant terms of the interpolation relations are 
	\begin{align*}
		\int_{e_b} \hat{w}_3 z \dd{\vec{s}} &= \int_{e_b} \hat{u}_3 z \dd{\vec{s}}, &
		\int_{e_t} \hat{w}_3 z \dd{\vec{s}} &= \int_{e_t} \hat{u}_3 z \dd{\vec{s}}, \\
		\int_{\hat{P}} \hat{w}_3 z_3 \dd{\vec{x}} &= \int_{\hat{P}} \hat{u}_3 z_3 \dd{\vec{x}}, &
		\int_{\hat{P}} \hdiv \hat{\vec{w}} z \dd{\vec{x}} &= \int_{\hat{P}} \hdiv (\hat{\vec{u}} - \hat{\vec{v}}_*) z\dd{\vec{x}}.
	\end{align*}
	From here the same steps as for the first component yield the desired estimate.
\end{proof}

Using the transformation \eqref{eq:trafo} we bring the stability estimate to an element of the reference family.
\begin{lemma}
	Let $\tilde{P} = J_{\tilde{P}} \hat{P} + \vec{x}_0$, $\vec{x}_0 \in \R^3$, and $\tilde{\vec{v}}\in\vec{W}^{1,p}(\tilde{P})$, $1\leq p \leq \infty$. Then on the prism $\tilde{P}$ the estimate
	\begin{equation*}
		\norm{\tBDMIk{k}\tilde{\vec{v}}}_{0,p,\tilde{P}} \lesssim \sum_{\abs{\vec{\alpha}}\leq 1} h^{\vec{\alpha}} \norm{D^{\vec{\alpha}}\tilde{\vec{v}}}_{0,p,\tilde{P}} + h_{\tilde{P}} \norm{\tdiv \tilde{\vec{v}}}_{0,p,\tilde{P}} + (h_1+h_2)\norm{\pdv{\tilde{v}_3}{\tilde{x}_3}}_{0,p,\tilde{P}}
	\end{equation*}
	holds, where $h_{\tilde{P}} = \max\{h_1, h_2, h_3\}$.
\end{lemma}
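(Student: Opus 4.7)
The approach is a standard scaling argument via the Piola transformation, coupled with the reference-element stability estimate already proven in \Cref{lem:stability_BDM_reference_element_prism}. Define the contravariant (Piola) pullback $\hat{\vec{v}}(\hat{\vec{x}}) = (\det J_{\tilde{P}}) J_{\tilde{P}}^{-1} \tilde{\vec{v}}(J_{\tilde{P}}\hat{\vec{x}} + \vec{x}_0)$; since $J_{\tilde{P}}$ is diagonal, this reduces component-wise to $\hat{v}_i = (h_1 h_2 h_3 / h_i) \tilde{v}_i \circ F$. Because the BDM functionals in \eqref{eq:interpolation_relations} are affinely invariant under Piola transforms of $\vec{H}_{\mathrm{div}}$-conforming polynomials on a diagonal transformation of $\hat{P}$ (the polynomial space $\vec{P}_{k,k}$ and all test spaces are preserved), the commuting diagram $\widehat{\tBDMIk{k}\tilde{\vec{v}}} = \hBDMIk{k} \hat{\vec{v}}$ holds, so it suffices to bound $(\hBDMIk{k}\hat{\vec{v}})_i$ in terms of $\hat{\vec{v}}$ on $\hat{P}$ and pull back.

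Next I would tabulate the relevant scalings under change of variables. Elementary computation gives, for $i,j\in I_3$,
\begin{equation*}
	\norm{\hat{v}_i}_{0,p,\hat{P}} = \frac{(h_1 h_2 h_3)^{1-1/p}}{h_i} \norm{\tilde{v}_i}_{0,p,\tilde{P}}, \quad \norm{\pdv{\hat{v}_i}{\hat{x}_j}}_{0,p,\hat{P}} = \frac{h_j\, (h_1 h_2 h_3)^{1-1/p}}{h_i} \norm{\pdv{\tilde{v}_i}{\tilde{x}_j}}_{0,p,\tilde{P}},
\end{equation*}
and $\norm{\hdiv \hat{\vec{v}}}_{0,p,\hat{P}} = (h_1 h_2 h_3)^{1-1/p} \norm{\tdiv \tilde{\vec{v}}}_{0,p,\tilde{P}}$, together with the analogous formula for $\pdv*{\hat{v}_3}{\hat{x}_3}$.

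Now I apply \Cref{lem:stability_BDM_reference_element_prism} component-by-component and multiply through by $h_i (h_1 h_2 h_3)^{1/p-1}$. For $i\in I_2$ this yields
\begin{equation*}
	\norm{(\tBDMIk{k}\tilde{\vec{v}})_i}_{0,p,\tilde{P}} \lesssim \norm{\tilde{v}_i}_{0,p,\tilde{P}} + \sum_{j\in I_3} h_j \norm{\pdv{\tilde{v}_i}{\tilde{x}_j}}_{0,p,\tilde{P}} + h_i \norm{\tdiv \tilde{\vec{v}}}_{0,p,\tilde{P}} + h_i \norm{\pdv{\tilde{v}_3}{\tilde{x}_3}}_{0,p,\tilde{P}},
\end{equation*}
and the analogous estimate for $i=3$ without the last term. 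Summing these three bounds, and using $h_i \leq h_{\tilde{P}}$ on the divergence term and $h_1, h_2 \leq h_1 + h_2$ on the vertical-derivative term, gives exactly the claimed inequality, since $\sum_{i,j} h_j \norm{\pdv*{\tilde{v}_i}{\tilde{x}_j}}_{0,p,\tilde{P}}$ together with $\sum_i \norm{\tilde{v}_i}_{0,p,\tilde{P}}$ is equivalent to $\sum_{\abs{\vec{\alpha}}\leq 1} h^{\vec{\alpha}} \norm{D^{\vec{\alpha}} \tilde{\vec{v}}}_{0,p,\tilde{P}}$.

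No part of this is genuinely difficult; the only thing to be careful about is the bookkeeping of the anisotropic factors $h_i$ after scaling. The one qualitative point worth verifying explicitly is that the extra term $(h_1+h_2)\norm{\pdv*{\tilde{v}_3}{\tilde{x}_3}}_{0,p,\tilde{P}}$—which has no counterpart in the simplicial estimate—comes solely from the first two components and inherits the factor $h_i$ rather than $h_3$, explaining why the coefficient is $h_1+h_2$ and not $h_{\tilde{P}}$.
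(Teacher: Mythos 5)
Your argument is correct and is essentially the paper's own proof: a Piola pullback with the diagonal matrix $J_{\tilde{P}}$, the component-wise scaling identities for $\norm{\hat{v}_i}_{0,p}$, $\norm{\partial_{\hat{x}_j}\hat{v}_i}_{0,p}$, $\norm{\hdiv\hat{\vec{v}}}_{0,p}$ and $\norm{\partial_{\hat{x}_3}\hat{v}_3}_{0,p}$, followed by a component-wise application of \Cref{lem:stability_BDM_reference_element_prism} and collection of the anisotropic factors (in particular the observation that the $\partial_{\hat{x}_3}\hat{v}_3$ term only enters through $i\in I_2$, producing the coefficient $h_1+h_2$). The bookkeeping matches the paper's computation exactly.
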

\begin{proof}
	Using \Cref{lem:stability_BDM_reference_element_prism} and the relations
	\begin{align*}
		\norm{\tilde{\vec{w}}}_{0,p,\tilde{P}} &= \bigg(\int_{\tilde{P}} \sum_{i\in I_d} \abs{\tilde{w}_i}^p \dd{\vec{x}}\bigg)^{\nicefrac{1}{p}} \\
		&\leq (\det J_{\tilde{P}})^{\nicefrac{1}{p}} \sum_{i\in I_d} {}_i h ^{-1} \left(\int_{\hat{P}} \abs{\hat{w}_i}^p \dd{\vec{x}}\right)^{\nicefrac{1}{p}} = (\det J_{\tilde{P}})^{\nicefrac{1}{p}} \sum_{i\in I_d} {}_i h ^{-1} \norm{\hat{w}_i}_{0,p,\hat{P}},
	\end{align*}
	\begin{equation*}
		(\det J_{\tilde{P}})^{\nicefrac{1}{p}}\norm{\hat{v}_i}_{1,p,\hat{P}} = {}_i h \sum_{\abs{\vec{\alpha}}\leq 1} h^{\vec{\alpha}} \norm{D^{\vec{\alpha}} \tilde{v}_i}_{0,p,\tilde{P}},
	\end{equation*}
	where ${}_i h = \prod_{j\in I_3\setminus\{i\}} h_j$, we compute
	\begingroup
	\allowdisplaybreaks
	\begin{align*}
		\norm{\tBDMIk{k}\tilde{\vec{v}}}_{0,p,\tilde{P}} &\leq (\det J_{\tilde{P}})^{\nicefrac{1}{p}} \sum_{i\in I_3} {}_i h^{-1} \norm{(\hBDMIk{k}\hat{\vec{v}})_i}_{0,p,\hat{P}} \\
		&\lesssim (\det J_{\tilde{P}})^{\nicefrac{1}{p}} \left[\sum_{i\in I_3} {}_i h^{-1} \left(\norm{\hat{v}_i}_{1,p,\hat{P}} + \norm{\hdiv\hat{\vec{v}}}_{0,p,\hat{P}}\right) + \frac{h_1+h_2}{h_1 h_2 h_3} \norm{\pdv{\hat{v}_3}{\hat{x}_3}}_{0,p,\hat{P}}\right]\\
		&= \sum_{i\in I_3} \left(\sum_{\abs{\vec{\alpha}}\leq 1} h^{\vec{\alpha}} \norm{D^{\vec{\alpha}}\tilde{v}_i}_{0,p,\tilde{P}} + h_i\norm{\tdiv \tilde{\vec{v}}}_{0,p,\tilde{P}}\right) + (h_1+h_2)\norm{\pdv{\tilde{v}_3}{\tilde{x}_3}}_{0,p,\tilde{P}}\\
		&\lesssim \sum_{\abs{\vec{\alpha}}\leq 1} h^{\vec{\alpha}} \norm{D^{\vec{\alpha}}\tilde{\vec{v}}}_{0,p,\tilde{P}} + h_{\tilde{P}} \norm{\tdiv \tilde{\vec{v}}}_{0,p,\tilde{P}} + (h_1+h_2)\norm{\pdv{\tilde{v}_3}{\tilde{x}_3}}_{0,p,\tilde{P}}. \qedhere
	\end{align*}
	\endgroup
\end{proof}
The estimate from the previous lemma can be brought to the general prism $P$ where the transformation is assumed to be reasonable in a certain sense.
The stability estimate on $P$ can be used to get the interpolation error estimate by a Bramble--Hilbert type argument.
The proofs for the stability and interpolation error estimates on the element $P$ follow the same steps as their analogs on simplices, see \cite[Theorems 3.5, 4.3]{ApelKempf2020} and cf. \cite[Theorems 3.1, 6.2]{AcostaApelDuranLombardi2011}, which is why they are omitted for brevity.
\begin{theorem}\label{th:BDM_prism_stability}
	Let $P$ be a prism element that emerges by the affine transformation $\vec{x} = J_P\tilde{\vec{x}}$, with  $\norm{J_P}_{\infty},\norm{J_P^{-1}}_{\infty} \leq C$, of the element $\tilde{P}\in\mathcal{R}_P$. Then for $\vec{v}\in\vec{W}^{1,p}(P)$, $1\leq p\leq\infty$, the estimate
	\begin{equation*}
		\norm{\BDMIk{k} \vec{v}}_{0,p,P} \lesssim \norm{\vec{v}}_{0,p,P} + \sum_{j\in I_3} h_j\norm{\pdv{\vec{v}}{\vec{l}_j}}_{0,p,P} + h_P \norm{\div\vec{v}}_{0,p,P} + (h_1+h_2)\norm{\pdv{v_3}{\vec{l}_3}}_{0,p,P}
	\end{equation*}
	is satisfied. The vectors $\vec{l}_j$ are the outgoing unit vectors along the edges adjacent to the transformed vertex $\vec{p}_i$.
\end{theorem}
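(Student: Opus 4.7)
The plan is to lift the stability estimate from \Cref{lem:stability_BDM_reference_element_prism}, already expressed on an element $\tilde{P}$ of the reference family in the preceding lemma, to the general prism $P$ via the contravariant Piola transformation and the commuting property of the BDM interpolant under this transformation.

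First I would introduce the Piola transform $\vec{v}(\vec{x}) = (\det J_P)^{-1} J_P \tilde{\vec{v}}(\tilde{\vec{x}})$ associated with the affine map $\vec{x}=J_P\tilde{\vec{x}}$. Since $P$ is obtained from $\tilde{P}$ by an invertible affine change of coordinates and the BDM element is $\vec{H}_{\mathrm{div}}$-conforming with moments that respect the Piola pullback, one has the commutation $\BDMIk{k}\vec{v} = (\det J_P)^{-1} J_P\, \tBDMIk{k}\tilde{\vec{v}}$. Consequently
\begin{equation*}
	\norm{\BDMIk{k}\vec{v}}_{0,p,P} \lesssim \abs{\det J_P}^{\nicefrac{1}{p}-1} \norm{J_P}_\infty \norm{\tBDMIk{k}\tilde{\vec{v}}}_{0,p,\tilde{P}},
\end{equation*}
where the assumption $\norm{J_P}_\infty,\norm{J_P^{-1}}_\infty\leq C$ keeps the multiplicative constant independent of the anisotropy parameters $h_j$.

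Next I would invoke the previous lemma to bound $\norm{\tBDMIk{k}\tilde{\vec{v}}}_{0,p,\tilde{P}}$ by
\begin{equation*}
	\sum_{\abs{\vec{\alpha}}\leq 1} h^{\vec{\alpha}} \norm{D^{\vec{\alpha}}\tilde{\vec{v}}}_{0,p,\tilde{P}} + h_{\tilde P}\norm{\tdiv\tilde{\vec{v}}}_{0,p,\tilde{P}} + (h_1+h_2)\norm{\pdv{\tilde{v}_3}{\tilde{x}_3}}_{0,p,\tilde{P}},
\end{equation*}
and then transform each term back to $P$. The Piola identity $\div\vec{v} = (\det J_P)^{-1}\tdiv\tilde{\vec{v}}$ handles the divergence term, and the $L^p$-norm of $\tilde{\vec{v}}$ transforms into $\norm{\vec{v}}_{0,p,P}$ up to bounded factors. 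For the first-order derivatives the chain rule $\pdv{\tilde{v}_i}{\tilde{x}_j} = \sum_k (J_P)_{kj}\, \pdv{v_k}{x_\ell}(J_P^{-1})_{\ell i}\cdot \text{(factors)}$ expresses each $\pdv{\tilde{v}_i}{\tilde{x}_j}$ as a bounded linear combination of directional derivatives $\pdv{\vec{v}}{\vec{l}_j}$ along the edges of $P$, since the columns of $J_P$ are, up to the lengths $h_j$, the edge vectors $\vec{l}_j$; the factors $h^{\vec{\alpha}}$ absorb precisely the edge lengths that appear from $J_P^{-1}$.

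The main obstacle is the third component term: one needs to check that $\pdv{\tilde{v}_3}{\tilde{x}_3}$, after applying the Piola transform and the chain rule, reduces to $\pdv{v_3}{\vec{l}_3}$ modulo lower order contributions of the form already controlled by the first three summands on the right hand side of the claim, so that the prefactor $(h_1+h_2)$ (rather than $h_{P}$) is retained. This uses that the third column of $J_P$ and the vector $\vec{l}_3$ are (nearly) aligned in the structure of the reference family transformation \eqref{eq:trafo}, together with $\norm{J_P}_\infty,\norm{J_P^{-1}}_\infty\leq C$. Collecting all contributions yields the desired estimate, in the same spirit as the corresponding step for simplices in \cite[Theorem 3.5]{ApelKempf2020}.
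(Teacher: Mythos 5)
Your overall route is the intended one: the paper omits this proof precisely because it is the Piola-transform transfer argument from the simplex case (\cite[Theorem~3.5]{ApelKempf2020}) applied verbatim — commute $\BDMIk{k}$ with the contravariant Piola map, invoke the preceding lemma on $\tilde{P}$, and convert Cartesian derivatives on $\tilde{P}$ into directional derivatives along the edge vectors $\vec{l}_j$ of $P$, with all Jacobian factors controlled by $\norm{J_P}_\infty,\norm{J_P^{-1}}_\infty\leq C$. The divergence term and the terms $h_j\norm{D^{\vec{\alpha}}\tilde{\vec{v}}}_{0,p,\tilde{P}}$ are handled correctly in your sketch, since there the mixing of vector components by $J_P^{-1}$ is harmless (those right-hand-side terms involve the full vector $\vec{v}$).

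The gap is in your treatment of the last term. Under the Piola map, $\tilde{v}_3=\det J_P\sum_k(J_P^{-1})_{3k}v_k$, so $\pdv{\tilde{v}_3}{\tilde{x}_3}$ is a bounded combination of $\pdv{v_k}{\vec{l}_3}$ for \emph{all three} components $k$, not just $k=3$. Your proposed fallback — that the contributions from $k=1,2$ are ``lower order'' and ``already controlled by the first three summands'' — does not work in the anisotropic regime that is the whole point of the theorem: the only summand containing $\pdv{v_1}{\vec{l}_3}$ is $h_3\norm{\pdv{\vec{v}}{\vec{l}_3}}_{0,p,P}$, and $(h_1+h_2)\norm{\pdv{v_1}{\vec{l}_3}}_{0,p,P}\not\lesssim h_3\norm{\pdv{v_1}{\vec{l}_3}}_{0,p,P}$ when $h_3\ll h_1+h_2$ (a flat prism). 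What actually closes the argument is the structural fact that $J_P$ must not mix the third coordinate with the first two, i.e.\ $(J_P^{-1})_{31}=(J_P^{-1})_{32}=0$: this is implicitly required anyway for $P$ to be a prism on which the degrees of freedom \eqref{eq:interpolation_relations} (which single out $x_3$ and the facets $e_b$, $e_t$) are meaningful. With that block structure $\tilde{v}_3$ is a bounded multiple of $v_3$ alone and the prefactor $(h_1+h_2)$ survives exactly as claimed. You should state and use this property of $J_P$ explicitly rather than appeal to the columns being ``(nearly) aligned.''
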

\begin{theorem}
	Let a prism $P$ satisfy the same condition as in \Cref{th:BDM_prism_stability}. Then for $k\geq 1$, $0\leq m\leq k$ and $\vec{v}\in \vec{W}^{m+1,p}(P)$, $1\leq p\leq\infty$, the estimate
	\begin{align*}
		\norm{\vec{v}-\BDMIk{k}\vec{v}}_{0,p,P} &\lesssim \sum_{\abs{\vec{\alpha}}=m+1} h^{\vec{\alpha}} \norm{D^{\vec{\alpha}}_{\vec{l}}\vec{v}}_{0,p,P} + h_P \sum_{\abs{\vec{\alpha}}=m} h^{\vec{\alpha}} \norm{D^{\vec{\alpha}}_{\vec{l}} \div\vec{v}}_{0,p,P}\\
		&\hphantom{\lesssim\ }+(h_1+h_2) \sum_{\abs{\vec{\alpha}}= m}h^{\vec{\alpha}} \norm{\frac{\partial^{m+1}v_3}{\partial\vec{l}_1^{\alpha_1}\partial\vec{l}_2^{\alpha_2}\partial\vec{l}_3^{\alpha_3 +1}}}_{0,p,P}
	\end{align*}
	holds and the constant only depends on $k$.
\end{theorem}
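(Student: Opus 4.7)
The plan mirrors the argument for simplices in \cite[Theorem~4.3]{ApelKempf2020} and \cite[Theorem~6.2]{AcostaApelDuranLombardi2011}: combine the polynomial invariance of $\BDMIk{k}$ with the stability bound of \Cref{th:BDM_prism_stability} in a Bramble--Hilbert argument. Since $\BDMIk{k}\vec{q}=\vec{q}$ for every $\vec{q}\in\Polyvec{k,k}{P}$, the splitting $\vec{v}-\BDMIk{k}\vec{v}=(\vec{v}-\vec{q})-\BDMIk{k}(\vec{v}-\vec{q})$, the triangle inequality, and \Cref{th:BDM_prism_stability} applied to $\vec{v}-\vec{q}$ reduce the task to bounding $\norm{\vec{v}-\vec{q}}_{0,p,P}$, the terms $h_j\norm{\pdv{(\vec{v}-\vec{q})}{\vec{l}_j}}_{0,p,P}$ for $j\in I_3$, $h_P\norm{\div(\vec{v}-\vec{q})}_{0,p,P}$, and $(h_1+h_2)\norm{\pdv{(v_3-q_3)}{\vec{l}_3}}_{0,p,P}$ for a suitably chosen $\vec{q}\in\Polyvec{k,k}{P}$.

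The natural ansatz is to take $\vec{q}$ as a componentwise averaged Taylor polynomial of order $m$ along the edge directions $\vec{l}_j$. An anisotropic Bramble--Hilbert estimate, of the type used in \cite{AcostaApelDuranLombardi2011,ApelKempf2020}, then gives
\begin{equation*}
\norm{D^{\vec{\beta}}_{\vec{l}}(v_i-q_i)}_{0,p,P}\lesssim \sum_{\abs{\vec{\alpha}}=m+1-\abs{\vec{\beta}}} h^{\vec{\alpha}}\norm{D^{\vec{\alpha}+\vec{\beta}}_{\vec{l}}v_i}_{0,p,P}
\end{equation*}
for all multi-indices $\vec{\beta}$ with $\abs{\vec{\beta}}\leq m$. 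Choosing $\vec{\beta}=0$ and $\vec{\beta}=\vec{e}_j$ handles the first two families of norms and assembles them into the sum over $\abs{\vec{\alpha}}=m+1$ in the assertion; applying the same estimate with $\vec{\beta}=\vec{e}_3$ to the third component and multiplying by $(h_1+h_2)$ reproduces the final term of the claim exactly in the form $(h_1+h_2)\sum_{\abs{\vec{\alpha}}=m}h^{\vec{\alpha}}\norm{\partial^{m+1}v_3/(\partial\vec{l}_1^{\alpha_1}\partial\vec{l}_2^{\alpha_2}\partial\vec{l}_3^{\alpha_3+1})}_{0,p,P}$.

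The expected main obstacle is the divergence contribution. A naive componentwise choice of $\vec{q}$ only controls $\norm{\div(\vec{v}-\vec{q})}_{0,p,P}$ in terms of derivatives of the individual components $v_i$, rather than of $\div\vec{v}$ as required by the theorem. The remedy, analogous to \cite[Theorem~6.2]{AcostaApelDuranLombardi2011}, is to modify $\vec{q}$ so that $\div\vec{q}$ equals the averaged Taylor polynomial of $\div\vec{v}$ of order $m-1$. This is feasible because the image of $\div$ acting on $\Polyvec{k,k}{P}$ contains every polynomial of total degree $m-1\leq k-1$ in $\vec{l}_1,\vec{l}_2$ and degree $m-1$ in $\vec{l}_3$, and a bounded right inverse compatible with the anisotropic scaling can be constructed. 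A Bramble--Hilbert estimate applied directly to $\div\vec{v}$ then yields $\norm{\div(\vec{v}-\vec{q})}_{0,p,P}\lesssim \sum_{\abs{\vec{\alpha}}=m}h^{\vec{\alpha}}\norm{D^{\vec{\alpha}}_{\vec{l}}\div\vec{v}}_{0,p,P}$, while the correction needed to realise the matched divergence can be kept within the same order so as not to spoil the first two bounds. Collecting the four contributions produces the asserted error estimate.
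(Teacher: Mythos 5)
Your proposal follows exactly the route the paper indicates (and omits): polynomial invariance of $\BDMIk{k}$ on $\Polyvec{k,k}{P}$, the splitting $\vec{v}-\BDMIk{k}\vec{v}=(\vec{v}-\vec{q})-\BDMIk{k}(\vec{v}-\vec{q})$, the stability bound of \Cref{th:BDM_prism_stability}, and an anisotropic Bramble--Hilbert estimate for the averaged Taylor polynomial, just as in the proofs of \cite[Theorem 4.3]{ApelKempf2020} and \cite[Theorem 6.2]{AcostaApelDuranLombardi2011}; the structure and the bookkeeping of the three families of terms are correct. The only point to tighten is your treatment of the divergence term: the ``modification'' you describe is not needed, because the componentwise averaged Taylor polynomial $Q^m$ already satisfies $D^{\vec{\alpha}}Q^m u = Q^{m-\abs{\vec{\alpha}}}D^{\vec{\alpha}}u$, hence $\div Q^m\vec{v}=Q^{m-1}\div\vec{v}$, and the Bramble--Hilbert lemma applied to $\div\vec{v}$ gives $\norm{\div(\vec{v}-\vec{q})}_{0,p,P}\lesssim\sum_{\abs{\vec{\alpha}}=m}h^{\vec{\alpha}}\norm{D^{\vec{\alpha}}_{\vec{l}}\div\vec{v}}_{0,p,P}$ directly. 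As written, your alternative remedy rests on an unconstructed right inverse of $\div$ with bounds uniform in the anisotropy, which is exactly the kind of claim that is delicate on stretched elements; replacing it by the commutation property closes that loose end and recovers the intended proof.
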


\printbibliography

\end{document}